\documentclass[reqno]{amsart}
\usepackage{amssymb}
\usepackage{ifpdf}
\ifpdf
  \usepackage[hyperindex,pagebackref]{hyperref}
\else
  \expandafter\ifx\csname dvipdfm\endcsname\relax
    \usepackage[hypertex,hyperindex,pagebackref]{hyperref}
  \else
    \usepackage[dvipdfm,hyperindex,pagebackref]{hyperref}
  \fi
\fi
\allowdisplaybreaks[4]
\theoremstyle{plain}
\newtheorem{thm}{Theorem}[section]
\newtheorem{open}{Problem}[section]
\newtheorem{lem}{Lemma}[section]
\newtheorem{cor}{Corollary}[section]
\theoremstyle{remark}
\newtheorem{rem}{Remark}[section]
\DeclareMathOperator{\td}{d\mspace{-1mu}}
\numberwithin{equation}{section}

\begin{document}

\title[Eight interesting identities]
{Eight interesting identities involving the exponential function, derivatives, and Stirling numbers of the second kind}

\author[F. Qi]{Feng Qi}
\address{Department of Mathematics, School of Science, Tianjin Polytechnic University, Tianjin City, 300387, China; College of Mathematics, Inner Mongolia University for Nationalities, Tongliao City, Inner Mongolia Autonomous Region, 028043, China}
\email{\href{mailto: F. Qi <qifeng618@gmail.com>}{qifeng618@gmail.com}, \href{mailto: F. Qi <qifeng618@hotmail.com>}{qifeng618@hotmail.com}, \href{mailto: F. Qi <qifeng618@qq.com>}{qifeng618@qq.com}}
\urladdr{\url{http://qifeng618.wordpress.com}}

\begin{abstract}
In the paper, the author establishes some identities which show that the functions $\frac1{(1-e^{\pm t})^k}$ and the derivatives $\bigl(\frac1{e^{\pm t}-1}\bigr)^{(i)}$ can be expressed each other by linear combinations with coefficients involving the combinatorial numbers and the Stirling numbers of the second kind, where $t\ne0$ and $i,k\in\mathbb{N}$.
\end{abstract}

\subjclass[2010]{Primary 26A24, 33B10; Secondary 11B73, 34A30}

\keywords{Identity; Exponential function; Derivative; Stirling number of the second kind; Combinatorial number; Equivalence}

\thanks{This paper was typeset using \AmS-\LaTeX}

\maketitle

\section{Introduction}

Throughout this paper, we denote the set of all positive integers by $\mathbb{N}$.
\par
We observe that
\begin{equation}\label{koumandos-(2.3)}
\frac1{(1-e^{-t})^2}=1+\frac1{e^t-1}-\biggl(\frac1{e^t-1}\biggr)'
\end{equation}
and
\begin{equation}\label{koumandos-1131}
\frac1{(1-e^{-t})^3}=1+\frac1{e^t-1}-\frac32\biggl(\frac1{e^t-1}\biggr)' +\frac12\biggl(\frac1{e^t-1}\biggr)''
\end{equation}
for $t\ne0$. Motivated by these two identities, we naturally pose the following problem.

\begin{open}\label{Qi-exp-identity-open}
For $t\ne0$ and $k\in\mathbb{N}$, determine the numbers $a_{k,i-1}$ for $1\le i\le k$ such that
\begin{equation}\label{open-problem-identity}
\frac1{(1-e^{-t})^k}=1+\sum_{i=1}^{k}a_{k,i-1}\biggl(\frac1{e^t-1}\biggr)^{(i-1)}.
\end{equation}
\end{open}

The first aim of this paper is to find an affirmative answer, which may be stated as Theorem~\ref{answer-open-thm} below, to Problem~\ref{Qi-exp-identity-open}.

\begin{thm}\label{answer-open-thm}
For $i,k\in\mathbb{N}$ with $1\le i\le k$, the coefficients $a_{k,i-1}$ defined in~\eqref{open-problem-identity} may be calculated by
\begin{equation}\label{a-k-M(k-i)-dfn}
  a_{k,i-1}=(-1)^{i^2+1}|M_{k-i+1}(k,i)|,
\end{equation}
where
\begin{align}
|M_1(k,i)|&=\frac1{(i-1)!}\binom{k}{i},\\
|M_j(k,i)|&=
\begin{vmatrix}
\frac1{(i-1)!}\binom{k}{i} & S(i+1,i) & \dotsm  &  S(i+j-1,i)\\
\frac1{i!}\binom{k}{i+1} & S(i+1,i+1) & \dotsm  &  S(i+j-1,i+1)\\
\frac1{(i+1)!}\binom{k}{i+2} & 0 & \dotsm  & S(i+j-1,i+2) \\
\vdots & \vdots & \ddots &\vdots \\
\frac1{(i+j-2)!}\binom{k}{i+j-1} & 0 & \dotsm  & S(i+j-1,i+j-1)
\end{vmatrix}
\end{align}
for $2\le j\le k-i+1$, and
\begin{equation}\label{Stirling-Number-dfn}
S(i,m)=\frac1{m!}\sum_{\ell=0}^m(-1)^{m-\ell}\binom{m}{\ell}\ell^{i}
\end{equation}
for $1\le m\le i$ are the Stirling numbers of the second kind.
\end{thm}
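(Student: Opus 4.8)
The plan is to reduce everything to a single generating variable and then invert a triangular linear system. First I would set $u=\frac1{e^t-1}$ and observe the purely algebraic identity $\frac1{1-e^{-t}}=\frac{e^t}{e^t-1}=1+u$, so that the left-hand side of~\eqref{open-problem-identity} becomes the binomial expansion
\[
(1+u)^k=1+\sum_{m=1}^{k}\binom{k}{m}u^m .
\]
The point of this substitution is that $u$ satisfies the simple Riccati-type equation $u'=-u-u^2$, obtained by differentiating $u$ and using $e^t=1+\frac1u$. Consequently every derivative $\bigl(\frac1{e^t-1}\bigr)^{(n)}$ is a polynomial in $u$ with no constant term, and both sides of~\eqref{open-problem-identity} can be regarded as polynomials in the single quantity $u$, after which one only has to match coefficients.

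The key lemma I would prove is the closed form
\[
\biggl(\frac1{e^t-1}\biggr)^{(n)}=(-1)^n\sum_{j=1}^{n+1}(j-1)!\,S(n+1,j)\,\frac1{(e^t-1)^j}.
\]
I would establish this by induction on $n$, the base case $n=0$ being immediate. Differentiating the right-hand side and substituting $u'=-u-u^2$ turns the coefficient of $u^p$ at level $n+1$ into $p!\,S(n+1,p)+(p-1)!\,S(n+1,p-1)$; dividing by $(p-1)!$ this is exactly the recurrence $S(n+2,p)=p\,S(n+1,p)+S(n+1,p-1)$ for the Stirling numbers of the second kind, which closes the induction.

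Substituting this lemma into~\eqref{open-problem-identity} and comparing the coefficient of $u^m$ for each $m\in\{1,\dots,k\}$ yields the linear system
\[
\frac1{(m-1)!}\binom{k}{m}=\sum_{i=m}^{k}(-1)^{i-1}a_{k,i-1}\,S(i,m),\qquad 1\le m\le k.
\]
Writing $b_i=(-1)^{i-1}a_{k,i-1}$, this reads $\sum_{i=m}^{k}S(i,m)\,b_i=\frac1{(m-1)!}\binom{k}{m}$, whose coefficient matrix $A=\bigl(S(i,m)\bigr)$ (rows indexed by $m$, columns by $i$) is upper triangular with unit diagonal, since $S(i,m)=0$ for $i<m$ and $S(m,m)=1$. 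Hence $\det A=1$, and Cramer's rule gives $b_i=\det A_i$, where $A_i$ is obtained from $A$ by replacing its $i$-th column with the right-hand side vector.

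Finally I would reduce $\det A_i$ to the advertised determinant. Because the columns $1,\dots,i-1$ of $A$ vanish in rows $i,\dots,k$, the matrix $A_i$ is block upper triangular; its top-left $(i-1)\times(i-1)$ block is unit upper triangular, and its bottom-right $(k-i+1)\times(k-i+1)$ block is precisely $M_{k-i+1}(k,i)$ after the reindexing $m=i+r$, $i'=i+c$, under which the replaced first column becomes $\frac1{(i+r-1)!}\binom{k}{i+r}$ and the remaining entries become $S(i+c,i+r)$. Therefore $b_i=\det A_i=|M_{k-i+1}(k,i)|$, and since $i^2+1\equiv i-1\pmod 2$ we obtain $a_{k,i-1}=(-1)^{i-1}b_i=(-1)^{i^2+1}|M_{k-i+1}(k,i)|$, as claimed. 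I expect the main obstacle to be the bookkeeping in this last step, namely correctly identifying the surviving block of $A_i$ with $M_{k-i+1}(k,i)$ and tracking the overall sign, together with the inductive verification of the Stirling-number closed form for the derivatives.
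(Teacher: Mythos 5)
Your proposal is correct, and it follows the same overall strategy as the paper: binomial expansion of $\bigl(1+\frac1{e^t-1}\bigr)^k$, the identical key lemma expressing $\bigl(\frac1{e^t-1}\bigr)^{(n)}$ as $(-1)^n\sum_{j=1}^{n+1}(j-1)!\,S(n+1,j)\frac1{(e^t-1)^j}$ (this is exactly the paper's Lemma~\ref{exp-deriv-exp-lem}), and Cramer's rule applied to the resulting triangular linear system. Two points of execution differ, both in your favor. First, your inductive step invokes the Stirling recurrence $S(n+2,p)=p\,S(n+1,p)+S(n+1,p-1)$, whereas the paper re-derives this fact by direct manipulation of the explicit summation formula~\eqref{Stirling-Number-dfn}, which is more laborious. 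Second, and more substantially, by dividing the $m$-th equation by $(m-1)!$ and absorbing the sign into $b_i=(-1)^{i-1}a_{k,i-1}$, you turn the coefficient matrix into the unit upper triangular Stirling matrix $\bigl(S(i,m)\bigr)$ with determinant $1$, so Cramer's rule gives $b_i$ directly as a block determinant that is visibly $|M_{k-i+1}(k,i)|$; this bypasses the paper's messiest step, namely factoring the signs $(-1)^{(2i+j-2)(j-1)/2}$ and the factorials $\prod_\ell(i+\ell-2)!$ out of $|\Lambda_{k,i}|$ and $|D_j(k,i)|$ and cancelling them against $\prod_{\ell=i}^k\lambda_{\ell-1,\ell}$. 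What the paper's version buys in exchange is the explicit record of the intermediate determinants $|\Lambda_k|$, $|\Lambda_{k,i}|$, $|D_j(k,i)|$, which it reuses in its remarks; your version reaches~\eqref{a-k-M(k-i)-dfn} with essentially no sign bookkeeping beyond the final parity observation $(-1)^{i^2+1}=(-1)^{i-1}$.
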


Substituting
\begin{equation}\label{exp-t-exp(-t)-relation}
\frac1{e^t-1}=\frac1{1-e^{-t}}-1
\end{equation}
into the right hand side of~\eqref{open-problem-identity} leads to Corollary~\ref{corollary-identity-cor} below.

\begin{cor}\label{corollary-identity-cor}
For $t\ne0$ and $k\in\mathbb{N}$, we have
\begin{equation}\label{corollary-identity}
\frac1{(1-e^{-t})^k}=\sum_{i=1}^{k}a_{k,i-1}\biggl(\frac1{1-e^{-t}}\biggr)^{(i-1)},
\end{equation}
where $a_{k,i-1}$ is determined by~\eqref{a-k-M(k-i)-dfn}.
\end{cor}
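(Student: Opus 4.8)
The plan is to deduce the corollary directly from Theorem~\ref{answer-open-thm}, exploiting the fact that, by~\eqref{exp-t-exp(-t)-relation}, the functions $\frac1{1-e^{-t}}$ and $\frac1{e^t-1}$ differ only by the additive constant $1$. Since a constant is annihilated by every derivative of positive order, this means
\begin{equation*}
\biggl(\frac1{1-e^{-t}}\biggr)^{(i-1)}=\biggl(\frac1{e^t-1}\biggr)^{(i-1)},\qquad 2\le i\le k,
\end{equation*}
whereas for the single index $i=1$ the two terms differ by exactly that constant $1$. This is the structural observation that should make the constant term $1$ present in~\eqref{open-problem-identity} disappear in~\eqref{corollary-identity}.

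Concretely, I would begin with the right hand side of~\eqref{corollary-identity}, separate the $i=1$ summand, and replace each remaining derivative of $\frac1{1-e^{-t}}$ (for $2\le i\le k$) by the corresponding derivative of $\frac1{e^t-1}$ using the displayed relation. Rewriting the lone $i=1$ term through $\frac1{1-e^{-t}}=\frac1{e^t-1}+1$ then reproduces a free constant $a_{k,0}$ together with the complete sum $\sum_{i=1}^{k}a_{k,i-1}\bigl(\frac1{e^t-1}\bigr)^{(i-1)}$. Invoking~\eqref{open-problem-identity} to identify this sum with $\frac1{(1-e^{-t})^k}-1$, the right hand side of~\eqref{corollary-identity} collapses to $\frac1{(1-e^{-t})^k}+(a_{k,0}-1)$. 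Hence the entire corollary reduces to the single normalization statement $a_{k,0}=1$.

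The main obstacle is therefore to establish $a_{k,0}=1$. In principle this is the determinant evaluation $a_{k,0}=(-1)^{1^2+1}|M_k(k,1)|=|M_k(k,1)|=1$, which one could try to force by column and row reduction together with the recurrence for the Stirling numbers $S(i,m)$; but I expect the cleanest route to be asymptotic rather than combinatorial. Letting $t\to-\infty$ in~\eqref{open-problem-identity}, I would use that $\frac1{(1-e^{-t})^k}\to0$, that $\frac1{e^t-1}\to-1$, and that every derivative $\bigl(\frac1{e^t-1}\bigr)^{(i-1)}$ with $i\ge2$ tends to $0$; the latter follows because, for $t<0$, $\frac1{e^t-1}=-\sum_{n\ge0}e^{nt}$, whose positive-order derivatives are $-\sum_{n\ge1}n^{\,i-1}e^{nt}\to0$. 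Passing to the limit leaves $0=1-a_{k,0}$, that is, $a_{k,0}=1$, which closes the argument and yields~\eqref{corollary-identity}. As a consistency check one can confirm $|M_1(1,1)|=|M_2(2,1)|=|M_3(3,1)|=1$ directly, in agreement with the base identities~\eqref{koumandos-(2.3)} and~\eqref{koumandos-1131}.
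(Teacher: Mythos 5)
Your proof is correct, and its first half --- substituting \eqref{exp-t-exp(-t)-relation} into the right-hand side of \eqref{open-problem-identity}, so that only the $i=1$ term feels the additive constant --- is precisely the paper's entire derivation: the paper proves this corollary with the single sentence preceding its statement. Where you genuinely go beyond the paper is in noticing that this substitution alone does not finish the job: it yields $\frac1{(1-e^{-t})^k}=(1-a_{k,0})+\sum_{i=1}^{k}a_{k,i-1}\bigl(\frac1{1-e^{-t}}\bigr)^{(i-1)}$, so the corollary as stated is equivalent to the normalization $a_{k,0}=1$, which the paper silently assumes and never verifies (via \eqref{a-k-M(k-i)-dfn} it amounts to the determinant evaluation $|M_k(k,1)|=1$). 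Your limit argument supplies exactly this missing piece: since \eqref{open-problem-identity} holds for all $t<0$, letting $t\to-\infty$ kills the left-hand side, sends $\frac1{e^t-1}$ to $-1$, and kills every derivative of positive order; your geometric-series justification $\frac1{e^t-1}=-\sum_{n\ge0}e^{nt}$, with positive-order derivatives $-\sum_{n\ge1}n^{i-1}e^{nt}$, is valid for $t<0$, the termwise differentiation and passage to the limit being justified by locally uniform convergence (alternatively, each positive-order derivative is $e^t$ times a rational function of $e^t$ whose denominator stays bounded away from zero as $t\to-\infty$). This gives $0=1-a_{k,0}$, hence $a_{k,0}=1$. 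So your argument is the paper's argument made complete: the paper's version is shorter but leaves a genuine unstated step, while yours closes that step and, as a by-product, proves the nontrivial determinant identity $|M_k(k,1)|=1$ for free, without any row or column reduction.
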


Theorem~\ref{answer-open-thm} and Corollary~\ref{corollary-identity-cor} tell us that the function $\frac1{(1-e^{-t})^k}$ can be expressed by linear combinations of the derivatives $\bigl(\frac1{e^t-1}\bigr)^{(i-1)}$ and $\bigl(\frac1{1-e^{-t}}\bigr)^{(i-1)}$ respectively.
\par
Combining the equality~\eqref{exp-t-exp(-t)-relation} with the left hand sides of the equalities~\eqref{open-problem-identity} and~\eqref{corollary-identity} and using the binomial theorem imply that the function $\frac1{(e^{t}-1)^k}$ can also be represented by some linear combinations of the derivatives $\bigl(\frac1{e^t-1}\bigr)^{(i-1)}$ and $\bigl(\frac1{1-e^{-t}}\bigr)^{(i-1)}$ respectively. To discover coefficients in these linear combinations becomes the second aim of this paper.

\begin{thm}\label{answer-open-thm=2}
For $i,k\in\mathbb{N}$ with $1\le i\le k$, the identity
\begin{equation}\label{open-problem-identity-b}
\frac1{(e^{t}-1)^k}=\sum_{i=1}^{k}b_{k,i-1}\biggl(\frac1{e^t-1}\biggr)^{(i-1)}
\end{equation}
is valid and the coefficients $b_{k,i-1}$ can be computed by
\begin{equation}\label{a-k-M(k-i)-dfn-b}
b_{k,i-1}=(-1)^{i-1}a_{k,i-1},
\end{equation}
where $a_{k,i-1}$ is just the quantity~\eqref{a-k-M(k-i)-dfn}.
\end{thm}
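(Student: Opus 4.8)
The natural first move is to pass from $e^t-1$ to $1-e^{-t}$ through~\eqref{exp-t-exp(-t)-relation}, which is the route the paragraph preceding the statement advertises. Writing $\frac1{e^t-1}=\frac1{1-e^{-t}}-1$ and applying the binomial theorem gives
\[
\frac1{(e^t-1)^k}=\sum_{j=0}^{k}\binom{k}{j}(-1)^{k-j}\frac1{(1-e^{-t})^j}.
\]
Into each term with $j\ge1$ I would substitute Theorem~\ref{answer-open-thm}, using $\frac1{(1-e^{-t})^0}=1$ for $j=0$. The constant parts then contribute $\sum_{j=0}^{k}\binom{k}{j}(-1)^{k-j}=(1-1)^k=0$, so no constant survives, consistent with the shape of~\eqref{open-problem-identity-b}. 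Interchanging the two summations leaves
\[
\frac1{(e^t-1)^k}=\sum_{i=1}^{k}\Biggl[\sum_{j=i}^{k}\binom{k}{j}(-1)^{k-j}a_{j,i-1}\Biggr]\biggl(\frac1{e^t-1}\biggr)^{(i-1)},
\]
so that $b_{k,i-1}=\sum_{j=i}^{k}\binom{k}{j}(-1)^{k-j}a_{j,i-1}$. On this route the whole difficulty is to show that this alternating sum over $j$ collapses to a single scalar multiple of $a_{k,i-1}$, namely the coefficient recorded in~\eqref{a-k-M(k-i)-dfn-b}; that is a genuine identity among the determinants appearing in~\eqref{a-k-M(k-i)-dfn} and would be the technical core.

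I would actually avoid that combinatorial collapse altogether by using the reflection $t\mapsto-t$, which makes transparent why $b_{k,i-1}$ is a pure multiple of a single $a_{k,i-1}$ rather than a sum. I would apply $t\mapsto-t$ to Corollary~\ref{corollary-identity-cor} rather than to Theorem~\ref{answer-open-thm}, since the right-hand side of~\eqref{corollary-identity} carries no constant term. On the left, $\frac1{(1-e^{-t})^k}$ becomes $\frac1{(1-e^{t})^k}=(-1)^k\frac1{(e^t-1)^k}$ because $1-e^t=-(e^t-1)$. On the right I must differentiate through the substitution: with $\phi(t)=\frac1{1-e^{-t}}$ and $\psi(t)=\phi(-t)$ the chain rule gives $\phi^{(i-1)}(-t)=(-1)^{i-1}\psi^{(i-1)}(t)$, and since $\psi(t)=\frac1{1-e^{t}}=-\frac1{e^t-1}$ the differentiated right-hand side introduces one further sign. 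Each summand therefore returns a signed copy of $\bigl(\frac1{e^t-1}\bigr)^{(i-1)}$, and after clearing the overall $(-1)^k$ from the left one reads off $b_{k,i-1}$ as an explicit power of $-1$ times $a_{k,i-1}$.

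Hence the only real work on the reflection route is the sign accounting: combining the $(-1)^k$ coming from $1-e^t=-(e^t-1)$, the chain-rule factor $(-1)^{i-1}$, and the extra sign from $\frac1{1-e^{t}}=-\frac1{e^t-1}$, and then reconciling the resulting power of $-1$ with the coefficient asserted in~\eqref{a-k-M(k-i)-dfn-b}. I expect this bookkeeping — not any analytic subtlety — to be the one place where care is essential, and I would pin it down by testing the outcome against the explicit cases $k=2$ and $k=3$ recorded in~\eqref{koumandos-(2.3)} and~\eqref{koumandos-1131}. The payoff of this route is that it delivers the single coefficient at once; and comparing it with the sum produced by the binomial route of the first paragraph would, as a bonus, establish the combinatorial collapse that was the obstacle there.
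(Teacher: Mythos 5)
Your reflection argument is a genuinely different route from the paper's, and as a method it is sound. The paper starts from the same binomial expansion as your first paragraph, but it never needs the combinatorial collapse you were worried about: instead of inserting Theorem~\ref{answer-open-thm} term by term, it substitutes Corollary~\ref{exp-deriv-exp-lem-cor-mu} into the right-hand side of~\eqref{open-problem-identity-b}, equates coefficients of the powers $\frac1{(1-e^{-t})^m}$ to obtain the triangular system~\eqref{b(k,0)-3}, and then uses the relation $\mu_{i,m}=(-1)^{i+m+1}\lambda_{i,m}$ from~\eqref{mu-lambda-relation} to identify that system with the system~\eqref{equ-sys-lambda} which already determined the $a_{k,i-1}$; uniqueness of the solution of a triangular system then expresses each $b_{k,i-1}$ as a sign times $a_{k,i-1}$. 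So both the paper's route and yours produce a single multiple of $a_{k,i-1}$ directly; yours does it with a one-line change of variables in Corollary~\ref{corollary-identity-cor} instead of Lemma~\ref{exp-deriv-exp-lem-neg-t} and a linear system.

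The genuine problem sits exactly in the step you postponed, and it cannot be reconciled away: the sign your route produces contradicts the theorem. Carrying out your bookkeeping with $\phi(t)=\frac1{1-e^{-t}}$, the left-hand side of~\eqref{corollary-identity} becomes $\frac{(-1)^k}{(e^t-1)^k}$, while $\phi^{(i-1)}(-t)=(-1)^{i-1}\bigl(-\frac1{e^t-1}\bigr)^{(i-1)}=(-1)^{i}\bigl(\frac1{e^t-1}\bigr)^{(i-1)}$, so that
\begin{equation*}
\frac1{(e^t-1)^k}=\sum_{i=1}^{k}(-1)^{k+i}a_{k,i-1}\biggl(\frac1{e^t-1}\biggr)^{(i-1)},
\qquad\text{i.e.}\qquad b_{k,i-1}=(-1)^{k+i}a_{k,i-1},
\end{equation*}
which agrees with~\eqref{a-k-M(k-i)-dfn-b} if and only if $k$ is odd. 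Your own proposed test already detects this: by~\eqref{koumandos-(2.3)} one has $a_{2,0}=1$, $a_{2,1}=-1$, so~\eqref{a-k-M(k-i)-dfn-b} prescribes $b_{2,0}=b_{2,1}=1$, yet $\frac1{e^t-1}+\bigl(\frac1{e^t-1}\bigr)'=-\frac1{(e^t-1)^2}$; the correct coefficients are $b_{2,0}=b_{2,1}=-1$, exactly as the displayed formula predicts. Since the functions $\bigl(\frac1{e^t-1}\bigr)^{(i-1)}$, $1\le i\le k$, are linearly independent, these coefficients are unique, so Theorem~\ref{answer-open-thm=2} as stated is genuinely false for even $k$. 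The defect is in the paper, not in your method: in the paper's proof the expansion coefficients $(-1)^{k+\ell}\binom{k}{\ell}$ of its opening display reappear as $(-1)^{\ell}\binom{k}{\ell}$ in the display where coefficients are equated, i.e.\ a factor $(-1)^k$ is dropped (and a second sign slips between~\eqref{b(k,0)-3} and~\eqref{b(k,0)-30-rewrit}); together these create the $k$-independent sign in~\eqref{a-k-M(k-i)-dfn-b}. In summary: your proposal, completed honestly, proves the corrected identity $b_{k,i-1}=(-1)^{k+i}a_{k,i-1}$ and refutes the stated one for even $k$; as written, however, it stops short of the decisive sign computation and wrongly presumes the result will match~\eqref{a-k-M(k-i)-dfn-b}.
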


Utilizing~\eqref{exp-t-exp(-t)-relation} in the right hand side of~\eqref{open-problem-identity-b} brings about Corollary~\ref{answer-open-cor=2} below.

\begin{cor}\label{answer-open-cor=2}
For $i,k\in\mathbb{N}$ with $1\le i\le k$, the identity
\begin{equation}\label{open-cor-identity-b}
\frac1{(e^{t}-1)^k}=1+\sum_{i=1}^{k}b_{k,i-1}\biggl(\frac1{1-e^{-t}}\biggr)^{(i-1)}
\end{equation}
validates, where the coefficients $b_{k,i-1}$ are decided by~\eqref{a-k-M(k-i)-dfn-b}.
\end{cor}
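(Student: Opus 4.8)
The plan is to read off Corollary~\ref{answer-open-cor=2} from Theorem~\ref{answer-open-thm=2} by feeding the elementary relation~\eqref{exp-t-exp(-t)-relation} into the right hand side of~\eqref{open-problem-identity-b}, in precisely the way Corollary~\ref{corollary-identity-cor} arises from~\eqref{open-problem-identity}. Writing $g=\frac1{1-e^{-t}}$, the relation~\eqref{exp-t-exp(-t)-relation} becomes $\frac1{e^t-1}=g-1$, and the whole derivation hinges on a single observation: the additive constant $-1$ is annihilated by every derivative of positive order. Hence, for each index $i\ge2$ one has $\bigl(\frac1{e^t-1}\bigr)^{(i-1)}=(g-1)^{(i-1)}=g^{(i-1)}$, so those summands of~\eqref{open-problem-identity-b} transcribe verbatim into derivatives of $\frac1{1-e^{-t}}$, while only the term with $i=1$ is altered, splitting as $b_{k,0}(g-1)=b_{k,0}g-b_{k,0}$ and thereby releasing a constant.

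Performing this substitution turns~\eqref{open-problem-identity-b} into
\begin{equation*}
\frac1{(e^t-1)^k}=\sum_{i=1}^{k}b_{k,i-1}\biggl(\frac1{1-e^{-t}}\biggr)^{(i-1)}-b_{k,0}.
\end{equation*}
Every step so far is a mechanical, term-by-term rewriting, so the asserted identity~\eqref{open-cor-identity-b} reduces to the single numerical claim that the emergent constant $-b_{k,0}$ agrees with the constant displayed there. The proof therefore collapses to pinning down the leading coefficient $b_{k,0}$, and this is the one place where the sign conventions packed into~\eqref{a-k-M(k-i)-dfn-b} and~\eqref{a-k-M(k-i)-dfn} must be handled with care; I expect it to be the main and essentially only obstacle.

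To evaluate that constant I would bypass the determinant $|M_k(k,1)|$ and extract it from limiting behavior. Letting $t\to-\infty$ in the target identity~\eqref{open-cor-identity-b}, we have $e^t\to0$, so its left hand side tends to $\frac1{(-1)^k}=(-1)^k$, while $\frac1{1-e^{-t}}$ and all of its derivatives tend to $0$; the limit thus isolates the constant term and evaluates it with no combinatorial computation. The opposite limit $t\to+\infty$, in which $\frac1{1-e^{-t}}\to1$ while the higher derivatives vanish, instead shows that this same constant equals $-b_{k,0}$, in agreement with the displayed reduction above. Combining the two limits pins down $b_{k,0}$, and matching the resulting constant against~\eqref{open-cor-identity-b} finishes the argument; this final reconciliation of the signs, across the even and odd cases of $k$, is the only delicate point of the whole proof.
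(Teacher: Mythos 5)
You follow the paper's own route for the main step: substitute \eqref{exp-t-exp(-t)-relation} into the right-hand side of \eqref{open-problem-identity-b}, note that the constant $-1$ is annihilated by every derivative of positive order, and arrive at
\begin{equation*}
\frac1{(e^{t}-1)^k}=-b_{k,0}+\sum_{i=1}^{k}b_{k,i-1}\biggl(\frac1{1-e^{-t}}\biggr)^{(i-1)};
\end{equation*}
the paper then turns $-b_{k,0}$ into the constant $+1$ by quoting $b_{k,0}=-1$ from \eqref{b(k,0)-1}, whereas you propose to evaluate the constant by limits. Your limit computations are correct: as $t\to-\infty$, Lemma~\ref{exp-deriv-exp-lem-neg-t} shows that every $\bigl(\frac1{1-e^{-t}}\bigr)^{(i-1)}$ tends to $0$ while the left-hand side tends to $(-1)^k$, so the constant must equal $(-1)^k$; the $t\to+\infty$ limit then forces $b_{k,0}=(-1)^{k+1}$. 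But there the proposal stops: the promised ``final reconciliation of the signs'' with \eqref{open-cor-identity-b} is precisely the step that remains, and it cannot be carried out. For odd $k$, your constant $(-1)^k=-1$ contradicts the constant $+1$ printed in \eqref{open-cor-identity-b}; for even $k$, your value $b_{k,0}=-1$ contradicts \eqref{a-k-M(k-i)-dfn-b}, which together with $a_{k,0}=1$ (visible in \eqref{koumandos-(2.3)} and \eqref{koumandos-1131}, or obtained by letting $t\to-\infty$ in \eqref{open-problem-identity}) gives $b_{k,0}=+1$. So for every $k$ the matching fails, and your argument, completed honestly, refutes the statement rather than proves it. The case $k=1$ shows the conflict nakedly: \eqref{open-cor-identity-b} with $b_{1,0}=a_{1,0}=1$ asserts $\frac1{e^t-1}=1+\frac1{1-e^{-t}}$, contradicting \eqref{exp-t-exp(-t)-relation}.

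This is a defect of the statement, not of your method: the corollary as printed is false, and so are \eqref{a-k-M(k-i)-dfn-b} (for even $k$) and \eqref{b(k,0)-1} (for odd $k$). The error originates in the paper's proof of Theorem~\ref{answer-open-thm=2}, where the overall factor $(-1)^k$ in the expansion $\bigl(\frac1{1-e^{-t}}-1\bigr)^k=\sum_{\ell=0}^{k}(-1)^{k+\ell}\binom{k}{\ell}\frac1{(1-e^{-t})^{\ell}}$ is silently dropped in the subsequent display. Restoring it gives $b_{k,0}=(-1)^{k+1}$, $b_{k,i-1}=(-1)^{k+i}a_{k,i-1}$, and the corrected corollary
\begin{equation*}
\frac1{(e^{t}-1)^k}=(-1)^k+\sum_{i=1}^{k}b_{k,i-1}\biggl(\frac1{1-e^{-t}}\biggr)^{(i-1)},
\end{equation*}
with which both of your limits agree and which can be checked directly for $k=1,2$. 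In short: your approach is the paper's substitution plus a sounder way of fixing the constant; the genuine gap is that you deferred the final matching instead of executing it, and executing it shows the printed identity is wrong rather than proved.
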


\section{Lemmas}

For proving Theorems~\ref{answer-open-thm} and~\ref{answer-open-thm=2}, we need the following lemmas.

\begin{lem}\label{exp-deriv-exp-lem}
For $i\in\mathbb{N}\cup\{0\}$, we have
\begin{equation}\label{exp-deriv-exp}
\biggl(\frac1{e^t-1}\biggr)^{(i)}=\sum_{m=1}^{i+1}\frac{\lambda_{i,m}}{(e^t-1)^m},
\end{equation}
where
\begin{equation}\label{lambda-stirling-relation}
\lambda_{i,m}=(-1)^i(m-1)!S(i+1,m).
\end{equation}
\end{lem}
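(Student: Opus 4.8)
The plan is to argue by induction on $i$, reducing the whole identity to the standard recurrence for the Stirling numbers of the second kind. The engine of the induction is a differentiation rule for the negative powers $\frac1{(e^t-1)^m}$, which I would establish first. Writing $e^t=(e^t-1)+1$ in the numerator after the chain rule gives
\begin{equation*}
\frac{\td}{\td t}\frac1{(e^t-1)^m}=-m\frac{e^t}{(e^t-1)^{m+1}}=-\frac{m}{(e^t-1)^m}-\frac{m}{(e^t-1)^{m+1}},
\end{equation*}
so that differentiating any linear combination of negative powers of $e^t-1$ again yields such a combination, with the degree increasing by exactly one. This is what keeps the sum in~\eqref{exp-deriv-exp} of the stated shape.

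For the base case $i=0$ the identity~\eqref{exp-deriv-exp} reads $\frac1{e^t-1}=\frac{\lambda_{0,1}}{e^t-1}$, and~\eqref{lambda-stirling-relation} gives $\lambda_{0,1}=(-1)^0 0!\,S(1,1)=1$, which is correct. For the inductive step I would assume~\eqref{exp-deriv-exp} for some $i$, differentiate termwise by the rule above, and collect the coefficient of $\frac1{(e^t-1)^m}$. Each term $\frac{\lambda_{i,m}}{(e^t-1)^m}$ contributes $-m\lambda_{i,m}$ to the coefficient of $\frac1{(e^t-1)^m}$ and another $-m\lambda_{i,m}$ to that of $\frac1{(e^t-1)^{m+1}}$; after reindexing the second contribution this produces the recurrence
\begin{equation*}
\lambda_{i+1,m}=-m\lambda_{i,m}-(m-1)\lambda_{i,m-1},\qquad 1\le m\le i+2.
\end{equation*}

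It then remains to check that the closed form $\lambda_{i,m}=(-1)^i(m-1)!\,S(i+1,m)$ satisfies this recurrence. Substituting it into the right hand side, factoring out $(-1)^{i+1}(m-1)!$, and using $m(m-1)!=m!$ together with $(m-1)(m-2)!=(m-1)!$, I reduce the claim to the well-known identity $S(i+2,m)=mS(i+1,m)+S(i+1,m-1)$ for the Stirling numbers of the second kind. Comparing the result with $\lambda_{i+1,m}=(-1)^{i+1}(m-1)!\,S(i+2,m)$ then closes the induction.

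I expect the only genuinely delicate point to be the bookkeeping at the two ends of the index range, which is where the termwise reindexing of the sum must be aligned with the right conventions. At $m=1$ the recurrence loses its second term (formally $\lambda_{i,0}=0$, consistent with $S(\cdot,0)=0$), while at the top end the new power $\frac1{(e^t-1)^{i+2}}$ appears for the first time. Both boundary cases are absorbed automatically by the conventions $S(n,m)=0$ for $m<1$ or $m>n$, and a quick direct check using $S(n,1)=S(n,n)=1$ confirms $\lambda_{i+1,1}=(-1)^{i+1}$ and $\lambda_{i+1,i+2}=(-1)^{i+1}(i+1)!$, matching the formula. By contrast, the algebra reducing the coefficient recurrence to the Stirling recurrence is entirely routine.
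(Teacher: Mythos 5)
Your proposal is correct and follows essentially the same route as the paper: induction on $i$, termwise differentiation via $\frac{\td}{\td t}\bigl[\frac1{(e^t-1)^m}\bigr]=-m\bigl[\frac{1}{(e^t-1)^m}+\frac{1}{(e^t-1)^{m+1}}\bigr]$, and identification of the resulting coefficient recurrence $\lambda_{i+1,m}=-m\lambda_{i,m}-(m-1)\lambda_{i,m-1}$, including the same boundary checks at $m=1$ and $m=i+2$. The only difference is cosmetic: where you close the induction by citing the standard recurrence $S(i+2,m)=mS(i+1,m)+S(i+1,m-1)$, the paper derives that very recurrence inline from its explicit-sum definition~\eqref{Stirling-Number-dfn} of the Stirling numbers via a binomial-coefficient manipulation, so the two arguments coincide in substance.
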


\begin{proof}
We prove this lemma by induction on $i$.
\par
For $i=0,1$, it is simple to verify the identity~\eqref{exp-deriv-exp}.
\par
Assume that the identity~\eqref{exp-deriv-exp} is valid for some $i>1$.
\par
By the inductive hypothesis and a direct differentiation on both sides of the identity~\eqref{exp-deriv-exp}, we obtain
\begin{align*}
\biggl(\frac1{e^t-1}\biggr)^{(i+1)}&=\sum_{k=1}^{i+1}\lambda_{i,k}\frac{\td}{\td t} \biggl[\frac1{(e^t-1)^k}\biggr]\\
&=-\sum_{k=1}^{i+1}k\lambda_{i,k}\biggl[\frac{1}{(e^t-1)^k}+\frac{1}{(e^t-1)^{k+1}}\biggr]\\
&=-\Biggl[\sum_{k=1}^{i+1}\frac{k\lambda_{i,k}}{(e^t-1)^k} +\sum_{k=1}^{i+1}\frac{k\lambda_{i,k}}{(e^t-1)^{k+1}}\Biggr]\\
&=-\Biggl[\sum_{k=1}^{i+1}\frac{k\lambda_{i,k}}{(e^t-1)^k} +\sum_{k=2}^{i+2}\frac{(k-1)\lambda_{i,k-1}}{(e^t-1)^{k}}\Biggr]\\
&=-\Biggl[\frac{\lambda_{i,1}}{e^t-1} +\sum_{k=2}^{i+1}\frac{k\lambda_{i,k} +(k-1)\lambda_{i,k-1}}{(e^t-1)^{k}} +\frac{(i+1)\lambda_{i,i+1}}{(e^t-1)^{i+2}}\Biggr]
\end{align*}
and
\begin{align*}
k\lambda_{i,k} +(k-1)\lambda_{i,k-1}&=\sum_{\ell=1}^{k-1}(-1)^{i+k+\ell}\biggl[k\binom{k-1}{\ell-1} -(k-1)\binom{k-2}{\ell-1}\biggr]\ell^i+(-1)^ik^{i+1}\\
&=\sum_{\ell=1}^{k-1}(-1)^{i+k+\ell}\binom{k-1}{\ell-1}\ell^{i+1}+(-1)^ik^{i+1}\\
&=(-1)^i\sum_{\ell=1}^{k}(-1)^{k+\ell}\binom{k-1}{\ell-1}\ell^{i+1}\\
&=(-1)^{i+2}(k-1)!S(i+2,k)\\
&=-\lambda_{i+1,k}
\end{align*}
for $2\le k\le i+1$. Moreover, we observe that
\begin{equation*}
\lambda_{i,1}=-\lambda_{i+1,1}=(-1)^i \quad\text{and}\quad (i+1)\lambda_{i,i+1}=-\lambda_{i+1,i+2}=(-1)^i(i+1)!.
\end{equation*}
By induction on $i$, the identity~\eqref{exp-deriv-exp} is thus proved.
\end{proof}

Using the equality~\eqref{exp-t-exp(-t)-relation} in the left hand side of~\eqref{exp-deriv-exp} leads to Corollary~\ref{exp-deriv-exp-lem-cor} below.

\begin{cor}\label{exp-deriv-exp-lem-cor}
For $i\in\mathbb{N}$, we have
\begin{equation}\label{exp-deriv-exp(-t)}
\biggl(\frac1{1-e^{-t}}\biggr)^{(i)}=\sum_{m=1}^{i+1}\frac{\lambda_{i,m}}{(e^t-1)^m},
\end{equation}
where $\lambda_{i,m}$ is defined by~\eqref{lambda-stirling-relation}.
\end{cor}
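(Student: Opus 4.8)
The plan is to reduce this corollary to Lemma~\ref{exp-deriv-exp-lem} by exploiting the elementary relation~\eqref{exp-t-exp(-t)-relation}, rewritten as
\begin{equation*}
\frac1{1-e^{-t}}=\frac1{e^t-1}+1.
\end{equation*}
The point is that the two functions $\frac1{1-e^{-t}}$ and $\frac1{e^t-1}$ differ only by an additive constant, so all of their derivatives of positive order coincide.

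Concretely, I would first fix $i\in\mathbb{N}$ (so $i\ge1$) and differentiate the displayed relation $i$ times. Since the derivative of the constant function $1$ of any positive order vanishes, linearity of differentiation yields
\begin{equation*}
\biggl(\frac1{1-e^{-t}}\biggr)^{(i)}=\biggl(\frac1{e^t-1}\biggr)^{(i)}+\bigl(1\bigr)^{(i)}=\biggl(\frac1{e^t-1}\biggr)^{(i)}.
\end{equation*}
I would then substitute the explicit formula~\eqref{exp-deriv-exp} from Lemma~\ref{exp-deriv-exp-lem} for the right hand side, obtaining exactly
\begin{equation*}
\biggl(\frac1{1-e^{-t}}\biggr)^{(i)}=\sum_{m=1}^{i+1}\frac{\lambda_{i,m}}{(e^t-1)^m},
\end{equation*}
with $\lambda_{i,m}$ given by~\eqref{lambda-stirling-relation}. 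This completes the argument.

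There is essentially no obstacle here; the only subtlety worth flagging is the restriction $i\in\mathbb{N}$ in the statement, as opposed to $i\in\mathbb{N}\cup\{0\}$ in Lemma~\ref{exp-deriv-exp-lem}. This hypothesis is precisely what is needed: the additive constant $1$ is annihilated only under differentiation of order at least one, so for $i=0$ the two sides would differ by the constant $1$, and the clean identity would fail. Thus the proof is a one-line consequence of the lemma once one notes that the $\pm1$ discrepancy between $\frac1{1-e^{-t}}$ and $\frac1{e^t-1}$ is invisible to every positive-order derivative.
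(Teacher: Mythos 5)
Your proof is correct and is exactly the paper's argument: the paper derives this corollary by using the relation~\eqref{exp-t-exp(-t)-relation} in the left hand side of~\eqref{exp-deriv-exp}, which is precisely your observation that $\frac1{1-e^{-t}}$ and $\frac1{e^t-1}$ differ by an additive constant annihilated by any positive-order derivative. Your remark on why the hypothesis $i\in\mathbb{N}$ (rather than $i\in\mathbb{N}\cup\{0\}$) is essential is a correct and worthwhile clarification of a point the paper leaves implicit.
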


\begin{lem}\label{exp-deriv-exp-lem-neg-t}
For $i\in\mathbb{N}\cup\{0\}$, we have
\begin{equation}\label{exp-deriv-exp-mu}
\biggl(\frac1{1-e^{-t}}\biggr)^{(i)}=\sum_{m=1}^{i+1}\frac{\mu_{i,m}}{(1-e^{-t})^m},
\end{equation}
where
\begin{equation}\label{mu-stirling-relation}
\mu_{i,m}=(-1)^{m+1}(m-1)!S(i+1,m).
\end{equation}
\end{lem}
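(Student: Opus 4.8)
The plan is to argue by induction on $i$, exactly paralleling the proof of Lemma~\ref{exp-deriv-exp-lem} but with the base function $1-e^{-t}$ in place of $e^t-1$. The cases $i=0,1$ are verified directly: for $i=0$ the claim~\eqref{exp-deriv-exp-mu} reads $\frac1{1-e^{-t}}=\frac{\mu_{0,1}}{1-e^{-t}}$ with $\mu_{0,1}=S(1,1)=1$, and for $i=1$ one differentiation matches $\mu_{1,1}=S(2,1)=1$ and $\mu_{1,2}=-S(2,2)=-1$.

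For the inductive step the only analytic input is the differentiation rule. Writing $e^{-t}=1-(1-e^{-t})$ gives
\[
\frac{\td}{\td t}\frac1{(1-e^{-t})^m}=\frac{m}{(1-e^{-t})^m}-\frac{m}{(1-e^{-t})^{m+1}},
\]
which differs from the $e^t-1$ case used in Lemma~\ref{exp-deriv-exp-lem} only in the sign of the first term. Assuming~\eqref{exp-deriv-exp-mu} for $i$, I differentiate term by term and reindex the second resulting sum by $m\mapsto m+1$; then the coefficient of $\frac1{(1-e^{-t})^m}$ becomes $m\mu_{i,m}-(m-1)\mu_{i,m-1}$ for $2\le m\le i+1$, while the extreme terms contribute $\mu_{i,1}$ at $m=1$ and $-(i+1)\mu_{i,i+1}$ at $m=i+2$.

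The crux is then the purely combinatorial identity $m\mu_{i,m}-(m-1)\mu_{i,m-1}=\mu_{i+1,m}$. Substituting the definition~\eqref{mu-stirling-relation} and factoring out $(-1)^{m+1}(m-1)!$ reduces it to
\[
m\,S(i+1,m)+S(i+1,m-1)=S(i+2,m),
\]
which is the standard recurrence for the Stirling numbers of the second kind. The two boundary checks $\mu_{i,1}=\mu_{i+1,1}=1$ and $-(i+1)\mu_{i,i+1}=\mu_{i+1,i+2}=(-1)^{i+1}(i+1)!$ follow from $S(n,1)=S(n,n)=1$, and this closes the induction. I expect this Stirling recurrence to be the only nontrivial point; the rest is bookkeeping with indices and signs.

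As a cross-check, the lemma also follows at once from Lemma~\ref{exp-deriv-exp-lem} by the substitution $t\mapsto -t$. Since $\frac1{1-e^{-t}}=-\frac1{e^{-t}-1}$, the chain rule gives $\bigl(\frac1{1-e^{-t}}\bigr)^{(i)}=(-1)^{i+1}g^{(i)}(-t)$, where $g(s)=\frac1{e^{s}-1}$; inserting~\eqref{exp-deriv-exp} evaluated at $s=-t$ and using $\frac1{(e^{-t}-1)^m}=\frac{(-1)^m}{(1-e^{-t})^m}$ converts the coefficient $\lambda_{i,m}=(-1)^i(m-1)!\,S(i+1,m)$ into $(-1)^{i+m+1}\lambda_{i,m}=(-1)^{m+1}(m-1)!\,S(i+1,m)=\mu_{i,m}$, exactly as claimed. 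This shortcut confirms the induction without redoing the coefficient recurrence.
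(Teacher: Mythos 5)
Your proof is correct, and its skeleton---differentiate the expansion term by term, reindex, and read off the coefficient recursion $\mu_{i+1,m}=m\mu_{i,m}-(m-1)\mu_{i,m-1}$ together with the boundary relations $\mu_{i+1,1}=\mu_{i,1}$ and $\mu_{i+1,i+2}=-(i+1)\mu_{i,i+1}$---is exactly the computation in the paper's proof of Lemma~\ref{exp-deriv-exp-lem-neg-t}. Where you diverge is in how the recursion is closed. The paper never verifies the formula \eqref{mu-stirling-relation} against the recursion directly: it first identifies $\mu_{i,k}=(-1)^{i+k+1}\lambda_{i,k}$, i.e.\ relation \eqref{mu-lambda-relation}, by matching the $\mu$-recursion against the $\lambda$-recursion from the proof of Lemma~\ref{exp-deriv-exp-lem} (the step it describes as ``recurring repeatedly''), and only then substitutes \eqref{lambda-stirling-relation}. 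You instead plug the claimed closed form into the recursion and reduce it to the standard Stirling recurrence $S(i+2,m)=m\,S(i+1,m)+S(i+1,m-1)$; this is self-contained and arguably tidier, at the modest cost of invoking that recurrence as known rather than deriving everything from the defining sum \eqref{Stirling-Number-dfn}, as the paper's proof of Lemma~\ref{exp-deriv-exp-lem} does. Your cross-check via $t\mapsto-t$ is a genuinely different route that the paper does not take: writing $\frac1{1-e^{-t}}=-g(-t)$ with $g(s)=\frac1{e^s-1}$ and applying \eqref{exp-deriv-exp} proves \eqref{exp-deriv-exp-mu} in a few lines and, as a byproduct, establishes the paper's key relation \eqref{mu-lambda-relation} by pure substitution instead of recursion-matching; one could reasonably promote that shortcut to the primary proof and keep the induction as the confirmation.
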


\begin{proof}
A simple differentiation yields
\begin{align*}
\biggl(\frac1{1-e^{-t}}\biggr)^{(i+1)}&=\sum_{k=1}^{i+1}\mu_{i,k}\frac{\td}{\td t} \biggl[\frac1{(1-e^{-t})^k}\biggr]\\
&=\sum_{k=1}^{i+1}k\mu_{i,k}\biggl[\frac{1}{(1-e^{-t})^k}-\frac{1}{(1-e^{-t})^{k+1}}\biggr]\\
&=\sum_{k=1}^{i+1}\frac{k\mu_{i,k}}{(1-e^{-t})^k} -\sum_{k=1}^{i+1}\frac{k\mu_{i,k}}{(1-e^{-t})^{k+1}}\\
&=\sum_{k=1}^{i+1}\frac{k\mu_{i,k}}{(1-e^{-t})^k} -\sum_{k=2}^{i+2}\frac{(k-1)\mu_{i,k-1}}{(1-e^{-t})^{k}}\\
&=\frac{\mu_{i,1}}{1-e^{-t}} +\sum_{k=2}^{i+1}\frac{k\mu_{i,k} -(k-1)\mu_{i,k-1}}{(1-e^{-t})^{k}} -\frac{(i+1)\mu_{i,i+1}}{(1-e^{-t})^{i+2}}.
\end{align*}
Equating coefficients of $\frac1{(1-e^{-t})^{k}}$ for $1\le k\le i+2$ in
\begin{equation*}
\frac{\mu_{i,1}}{1-e^{-t}} +\sum_{k=2}^{i+1}\frac{k\mu_{i,k} -(k-1)\mu_{i,k-1}}{(1-e^{-t})^{k}} -\frac{(i+1)\mu_{i,i+1}}{(1-e^{-t})^{i+2}} =\sum_{k=1}^{i+2}\frac{\mu_{i+1,k}}{(1-e^{-t})^k}
\end{equation*}
gives
\begin{equation}
\begin{split}\label{recursion=i=i+1}
\mu_{i+1,1}&=\mu_{i,1},\\
\mu_{i+1,i+2}&=-(i+1)\mu_{i,i+1},
\end{split}
\end{equation}
and
\begin{equation}\label{i+1-k-i-k-i-k-1-recurs}
\mu_{i+1,k}=k\mu_{i,k} -(k-1)\mu_{i,k-1}
\end{equation}
for $1\le k\le i+1$.
\par
From an obvious fact that $\mu_{0,1}=1$ and the recursion formulas in~\eqref{recursion=i=i+1}, we can easily deduce
\begin{equation}\label{i-1-i-i+1-recurs}
\mu_{i,1}=1
\end{equation}
and
\begin{equation}\label{i-1-i-i+1-recurs-II}
\mu_{i,i+1}=(-1)^ii!
\end{equation}
for $i\ge0$.
\par
Employing equalities~\eqref{i-1-i-i+1-recurs} and~\eqref{i-1-i-i+1-recurs-II} and recurring repeatedly the formula~\eqref{i+1-k-i-k-i-k-1-recurs} result in
\begin{equation}\label{mu-lambda-relation}
\mu_{i,k}=(-1)^{i+k+1}\lambda_{i,k}.
\end{equation}
Replacing~\eqref{lambda-stirling-relation} into the above equality leads to~\eqref{mu-stirling-relation}. The proof of Lemma~\ref{exp-deriv-exp-lem-neg-t} is thus completed.
\end{proof}

Using the equality~\eqref{exp-t-exp(-t)-relation} in the left hand side of~\eqref{exp-deriv-exp-mu} brings out Corollary~\ref{exp-deriv-exp-lem-cor-mu} below.

\begin{cor}\label{exp-deriv-exp-lem-cor-mu}
For $i\in\mathbb{N}$, we have
\begin{equation}\label{exp-deriv-exp-mu(-t)}
\biggl(\frac1{e^{t}-1}\biggr)^{(i)}=\sum_{m=1}^{i+1}\frac{\mu_{i,m}}{(1-e^{-t})^m},
\end{equation}
where $\mu_{i,m}$ is defined by~\eqref{mu-stirling-relation}.
\end{cor}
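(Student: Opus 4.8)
The plan is to reduce the statement to Lemma~\ref{exp-deriv-exp-lem-neg-t} by means of the substitution~\eqref{exp-t-exp(-t)-relation}, exploiting the elementary fact that differentiation annihilates additive constants. First I would rewrite~\eqref{exp-t-exp(-t)-relation} in the form $\frac1{1-e^{-t}}=\frac1{e^t-1}+1$, so that the quantity appearing on the left-hand side of~\eqref{exp-deriv-exp-mu} differs from $\frac1{e^t-1}$ only by the constant $1$.

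Next I would differentiate this relation $i$ times. Because $i\in\mathbb{N}$, the $i$-th derivative of the constant $1$ equals $0$, and hence
\begin{equation*}
\biggl(\frac1{1-e^{-t}}\biggr)^{(i)}=\biggl(\frac1{e^t-1}\biggr)^{(i)}.
\end{equation*}
Substituting the right-hand side of this identity in place of the left-hand side of~\eqref{exp-deriv-exp-mu} then yields~\eqref{exp-deriv-exp-mu(-t)} at once, with the same coefficients $\mu_{i,m}$ recorded in~\eqref{mu-stirling-relation}.

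There is essentially no obstacle in this argument: the only point requiring attention is the hypothesis $i\in\mathbb{N}$, that is $i\ge1$, which is precisely what guarantees that the constant term contributes nothing after differentiation. Were $i=0$ admitted, the constant $1$ would survive and the asserted identity would instead pick up an extra summand, so the restriction $i\ge1$ is genuinely used rather than incidental.
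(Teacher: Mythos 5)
Your proof is correct and follows exactly the paper's route: the paper likewise obtains this corollary by applying the relation~\eqref{exp-t-exp(-t)-relation} to the left-hand side of~\eqref{exp-deriv-exp-mu}, so that the additive constant disappears under differentiation for $i\ge1$. Your explicit remark on why $i\in\mathbb{N}$ (rather than $i\in\mathbb{N}\cup\{0\}$) is needed is a nice touch that the paper leaves implicit.
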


Lemmas~\ref{exp-deriv-exp-lem} and~\ref{exp-deriv-exp-lem-neg-t} together with Corollaries~\ref{exp-deriv-exp-lem-cor} and~\ref{exp-deriv-exp-lem-cor-mu} declare that the derivatives $\bigl(\frac1{e^{t}-1}\bigr)^{(i)}$ and $\bigl(\frac1{1-e^{-t}}\bigr)^{(i)}$ can be expressed by linear combinations of the functions $\frac1{(e^t-1)^m}$ and $\frac1{(1-e^{-t})^m}$ respectively.

\section{Proofs of Theorems~\ref{answer-open-thm} and~\ref{answer-open-thm=2}}

Now it is time for us to prove Theorems~\ref{answer-open-thm} and~\ref{answer-open-thm=2}.

\begin{proof}[Proof of Theorem~\ref{answer-open-thm}]
It is easy to see that
\begin{equation*}
\frac1{(1-e^{-t})^k}=\biggl(1+\frac1{e^t-1}\biggr)^k=\sum_{m=0}^{k}\binom{k}{m}\frac1{(e^t-1)^m}.
\end{equation*}
On the other hand, by virtue of~\eqref{exp-deriv-exp} and changing the order of summations, it follows that
\begin{align*}
\sum_{i=0}^{k-1}a_{k,i}\biggl(\frac1{e^t-1}\biggr)^{(i)}& =\sum_{i=0}^{k-1}a_{k,i}\sum_{m=1}^{i+1}\frac{\lambda_{i,m}}{(e^t-1)^m}
=\sum_{m=1}^k\Biggl(\sum_{i=m-1}^{k-1} \lambda_{i,m}a_{k,i}\Biggr)\frac1{(e^t-1)^m}.
\end{align*}
Equating the coefficients of $\frac1{(e^t-1)^m}$ in
\begin{equation*}
\sum_{m=0}^{k}\binom{k}{m}\frac1{(e^t-1)^m} =1+\sum_{m=1}^k\Biggl(\sum_{i=m-1}^{k-1} \lambda_{i,m}a_{k,i}\Biggr)\frac1{(e^t-1)^m}
\end{equation*}
yields a system of linear equations
\begin{equation}\label{equ-sys-lambda}
\sum_{i=m-1}^{k-1}\lambda_{i,m}a_{k,i}=\binom{k}{m},\quad 1\le m\le k.
\end{equation}
By Cramer's rule in linear algebra, it is procured that
\begin{equation*}
a_{k,i-1}=\frac{|\Lambda_{k,i}|}{|\Lambda_k|}, \quad 1\le i\le k,
\end{equation*}
where
\begin{align}
|\Lambda_k|&=
\begin{vmatrix}\label{Lambda-matrix-dfn}
\lambda_{0,1} & \lambda_{1,1} & \lambda_{2,1} & \dotsm  & \lambda_{k-2,1} & \lambda_{k-1,1} \\
0 & \lambda_{1,2} & \lambda_{2,2} & \dotsm  & \lambda_{k-2,2} & \lambda_{k-1,2} \\
0& 0 & \lambda_{2,3} & \dotsm  & \lambda_{k-2,3} & \lambda_{k-1,3} \\
\vdots & \vdots & \vdots & \ddots & \vdots & \vdots  \\
0 & 0 & 0 & \dotsm  & \lambda_{k-2,k-1} & \lambda_{k-1,k-1} \\
0 & 0 & 0 & \dotsm  & 0 & \lambda_{k-1,k}
\end{vmatrix}\\
&=\prod_{\ell=1}^{k}\lambda_{\ell-1,\ell}, \notag\\
|\Lambda_{k,i}|&=
\begin{vmatrix}
\lambda_{0,1} &  \dotsm  & \lambda_{i-2,1} & \binom{k}{1} & \lambda_{i,1} & \dotsm & \lambda_{k-1,1} \\
0 &  \dotsm  & \lambda_{i-2,2} & \binom{k}{2} & \lambda_{i,2} & \dotsm  & \lambda_{k-1,2} \\
\vdots & \ddots & \vdots &\vdots & \vdots & \ddots & \vdots \\
0 & \dotsm  & \lambda_{i-2,i-1} & \binom{k}{i-1} & \lambda_{i,i-1} & \dotsm  &  \lambda_{k-1,i-1} \\
0 & \dotsm  & 0 & \binom{k}{i} & \lambda_{i,i} & \dotsm  &  \lambda_{k-1,i} \\
0 & \dotsm  & 0 & \binom{k}{i+1} & \lambda_{i,i+1} & \dotsm  &  \lambda_{k-1,i+1} \\
\vdots & \ddots & \vdots &\vdots & \vdots & \ddots &\vdots  \\
0 & \dotsm  & 0 & \binom{k}{k-1} & 0 & \dotsm  &   \lambda_{k-1,k-1} \\
0 & \dotsm  & 0 & \binom{k}{k} & 0 & \dotsm  & \lambda_{k-1,k}
\end{vmatrix}\\
&=\prod_{\ell=1}^{i-1}\lambda_{\ell-1,\ell}
\begin{vmatrix}
\binom{k}{i} & \lambda_{i,i} & \dotsm  &  \lambda_{k-2,i}  & \lambda_{k-1,i} \\
\binom{k}{i+1} & \lambda_{i,i+1} & \dotsm  &  \lambda_{k-2,i+1}  & \lambda_{k-1,i+1}\\
\binom{k}{i+2} & 0 & \dotsm  &  \lambda_{k-2,i+2}  & \lambda_{k-1,i+2} \\
\vdots & \vdots & \ddots &\vdots & \vdots \\
\binom{k}{k-1} & 0 & \dotsm  &  \lambda_{k-2,k-1}  & \lambda_{k-1,k-1} \\
\binom{k}{k} & 0 & \dotsm  & 0 & \lambda_{k-1,k}
\end{vmatrix}\notag\\
&=\prod_{\ell=1}^{i-1}\lambda_{\ell-1,\ell}|D_{k-i+1}(k,i)|,\notag
\end{align}
and
\begin{align}
  |D_j(k,i)|&=
  \begin{cases}
  \binom{k}{i}, & j=1\\
\begin{vmatrix}
\binom{k}{i} & \lambda_{i,i} & \dotsm  &  \lambda_{i+j-2,i}\\
\binom{k}{i+1} & \lambda_{i,i+1} & \dotsm  &  \lambda_{i+j-2,i+1}\\
\binom{k}{i+2} & 0 & \dotsm  &  \lambda_{i+j-2,i+2} \\
\vdots & \vdots & \ddots &\vdots \\
\binom{k}{i+j-1} & 0 & \dotsm  &  \lambda_{i+j-2,i+j-1}
\end{vmatrix}, & 2\le j\le k-i+1
\end{cases}\\
&=(-1)^{(2i+j-2)(j-1)/2}\prod_{\ell=1}^{j}(i+\ell-2)!|M_j(k,i)|. \notag
\end{align}
As a result,
\begin{align*}
a_{k,i-1}&=\frac{|D_{k-i+1}(k,i)|}{\prod_{\ell=i}^k\lambda_{\ell-1,\ell}}\\
&=\frac{(-1)^{(k-i)(k+i-1)/2}\prod_{\ell=1}^{k-i+1}(i+\ell-2)!|M_{k-i+1}(k,i)|} {(-1)^{(k+i-2)(k-i+1)/2}\prod_{\ell=i}^k(\ell-1)!}\\
&=(-1)^{k(k+1)-(i-1)^2}|M_{k-i+1}(k,i)|\\
&=(-1)^{i^2+1}|M_{k-i+1}(k,i)|.
\end{align*}
The proof of Theorem~\ref{answer-open-thm} is complete.
\end{proof}

\begin{proof}[Proof of Theorem~\ref{answer-open-thm=2}]
Making use of~\eqref{exp-t-exp(-t)-relation} and the binomial theorem, the left hand side of~\eqref{open-problem-identity-b} becomes
\begin{equation*}
\frac1{(e^{t}-1)^k}=\biggl(\frac1{1-e^{-t}}-1\biggr)^k =\sum_{\ell=0}^k(-1)^{k+\ell}\binom{k}{\ell}\frac1{(1-e^{-t})^{\ell}}.
\end{equation*}
Substituting~\eqref{exp-deriv-exp-mu(-t)} into the right hand side of~\eqref{open-problem-identity-b} and interchanging the order of summations generate
\begin{multline*}
\sum_{i=1}^{k}b_{k,i-1}\biggl(\frac1{e^t-1}\biggr)^{(i-1)}
=b_{k,0}\biggl(\frac1{1-e^{-t}}-1\biggr) +\sum_{i=2}^{k}b_{k,i-1} \sum_{m=1}^{i}\frac{\mu_{i-1,m}}{(1-e^{-t})^m}\\
\begin{aligned}
&=-b_{k,0}+\Biggl(b_{k,0}+\sum_{i=2}^k\mu_{i-1,1}b_{k,i-1}\Biggr)\frac{1}{1-e^{-t}} +\sum_{m=2}^k \Biggl(\sum_{i=m}^k \mu_{i-1,m}b_{k,i-1}\Biggr)\frac{1}{(1-e^{-t})^m}\\
&=-b_{k,0}+\sum_{m=1}^k \Biggl(\sum_{i=m}^k \mu_{i-1,m}b_{k,i-1}\Biggr)\frac{1}{(1-e^{-t})^m}.
\end{aligned}
\end{multline*}
Equating coefficients of $\frac1{(1-e^{-t})^{\ell}}$ in
\begin{equation*}
\sum_{\ell=0}^k(-1)^\ell\binom{k}{\ell}\frac1{(1-e^{-t})^{\ell}}
=-b_{k,0}+\sum_{m=1}^k \Biggl(\sum_{i=m}^k \mu_{i-1,m}b_{k,i-1}\Biggr)\frac{1}{(1-e^{-t})^m}
\end{equation*}
produces
\begin{equation}\label{b(k,0)-1}
b_{k,0}=-1
\end{equation}
and
\begin{equation}\label{b(k,0)-3}
\sum_{i=\ell}^k \mu_{i-1,\ell}b_{k,i-1}=(-1)^\ell\binom{k}{\ell},\quad 1\le\ell\le k.
\end{equation}
By virtue of~\eqref{mu-lambda-relation}, the equations system~\eqref{b(k,0)-3} can be rearranged as
\begin{equation}\label{b(k,0)-30-rewrit}
\sum_{i=\ell}^k(-1)^{i-1} \lambda_{i-1,\ell}b_{k,i-1}=\binom{k}{\ell},\quad 1\le\ell\le k.
\end{equation}
Comparing~\eqref{b(k,0)-30-rewrit} with~\eqref{equ-sys-lambda} reveals that
\begin{equation}
b_{k,i}=(-1)^ia_{k,i},\quad 0\le i\le k-1.
\end{equation}
The proof of Theorem~\ref{answer-open-thm=2} is thus complete.
\end{proof}

\section{Remarks}

In this section, we further list several remarks on the above two theorems, two lemmas, and four corollaries.

\begin{rem}
The identities obtained in Theorem~\ref{answer-open-thm}, Corollary~\ref{corollary-identity-cor}, Corollary~\ref{answer-open-cor=2}, and Theorem~\ref{answer-open-thm=2} imply that the linear ordinary differential equations
\begin{equation}\label{open-problem-ODE}
\sum_{i=1}^{k}a_{k,i-1}y^{(i-1)}=F_n(t)
\end{equation}
and
\begin{equation}
\sum_{i=1}^{k}(-1)^{i-1}a_{k,i-1}y^{(i-1)}=G_n(t)
\end{equation}
for
\begin{equation}
F_n(t)=
\begin{cases}
\dfrac1{(1-e^{-t})^k}-1, & n=1\\
\dfrac1{(1-e^{-t})^k}, & n=2\\
\end{cases}
\end{equation}
and
\begin{equation}
G_n(t)=
\begin{cases}
\dfrac1{(e^{t}-1)^k}-1, & n=1\\
\dfrac1{(e^{t}-1)^k}, & n=2
\end{cases}
\end{equation}
have unique solutions
\begin{equation}
\frac1{e^t-1}, \quad \frac1{1-e^{-t}}, \quad \frac1{1-e^{-t}}, \quad\text{and}\quad \frac1{e^t-1}
\end{equation}
respectively.
\end{rem}

\begin{rem}
The equality~\eqref{exp-deriv-exp} can be rewritten as
\begin{equation*}
\begin{pmatrix}
  \frac1{e^t-1} \\
  \bigl(\frac1{e^t-1}\bigr)' \\
  \bigl(\frac1{e^t-1}\bigr)'' \\
   \vdots                             \\
  \bigl(\frac1{e^t-1}\bigr)^{(i-1)} \\
  \bigl(\frac1{e^t-1}\bigr)^{(i)}
\end{pmatrix}
=\Lambda_{i+1}^{T}
\begin{pmatrix}
  \frac{1}{e^t-1} \\
  \frac{1}{(e^t-1)^2} \\
  \frac{1}{(e^t-1)^3} \\
  \vdots \\
  \frac{1}{(e^t-1)^i}\\
  \frac{1}{(e^t-1)^{i+1}}
\end{pmatrix},
\end{equation*}
where $\Lambda_{i+1}^{T}$ denotes the transpose of the matrix $\Lambda_{i+1}$ defined by~\eqref{Lambda-matrix-dfn}.
Therefore,
\begin{equation*}
\begin{pmatrix}
  \frac{1}{e^t-1} \\
  \frac{1}{(e^t-1)^2} \\
  \frac{1}{(e^t-1)^3} \\
  \vdots \\
  \frac{1}{(e^t-1)^i}\\
  \frac{1}{(e^t-1)^{i+1}}
\end{pmatrix}
=\bigl(\Lambda_{i+1}^{T}\bigr)^{-1}
\begin{pmatrix}
  \frac1{e^t-1} \\
  \bigl(\frac1{e^t-1}\bigr)' \\
  \bigl(\frac1{e^t-1}\bigr)'' \\
   \vdots                             \\
  \bigl(\frac1{e^t-1}\bigr)^{(i-1)} \\
  \bigl(\frac1{e^t-1}\bigr)^{(i)}
\end{pmatrix},
\end{equation*}
where $\Lambda_{i+1}^{-1}$ stands for the inverse of $\Lambda_{i+1}$ respectively. On the other hand, Theorem~\ref{answer-open-thm=2} can be rearranged directly as
\begin{equation*}
\begin{pmatrix}
  \frac{1}{e^t-1} \\
  \frac{1}{(e^t-1)^2} \\
  \frac{1}{(e^t-1)^3} \\
  \vdots \\
  \frac{1}{(e^t-1)^k}
\end{pmatrix}
=(-1)^{(k-1)k/2}
\begin{pmatrix}
   a_{1,0} & 0 & 0 & \dotsm & 0 \\
   a_{2,0} &  a_{2,1} & 0 & \dotsm & 0 \\
   a_{3,0} &  a_{3,1} &  a_{3,2} & \dotsm & 0\\
  \vdots & \vdots & \vdots & \ddots & \vdots \\
   a_{k,0} &  a_{k,1} &  a_{k,2} & \dotsm &  a_{k,k-1}
\end{pmatrix}
\begin{pmatrix}
  \frac1{e^t-1} \\
  \bigl(\frac1{e^t-1}\bigr)' \\
  \bigl(\frac1{e^t-1}\bigr)'' \\
   \vdots                     \\
  \bigl(\frac1{e^t-1}\bigr)^{(k-1)}
\end{pmatrix}.
\end{equation*}
As a result, it follows that
\begin{equation}
\Lambda_{k}^{-1} =(-1)^{(k-1)k/2}
\begin{pmatrix}
   a_{1,0} & a_{2,0} & a_{3,0} & \dotsm & a_{k,0} \\
   0 &  a_{2,1} & a_{3,1} & \dotsm & a_{k,1} \\
   0 &  0 &  a_{3,2} & \dotsm & a_{k,2}\\
  \vdots & \vdots & \vdots & \ddots & \vdots \\
   0 &  0 &  0 & \dotsm &  a_{k,k-1}
\end{pmatrix}.
\end{equation}
This reveals that Theorem~\ref{answer-open-thm=2} and Lemma~\ref{exp-deriv-exp-lem} are essentially equivalent to each other.
\end{rem}

\begin{rem}
The identities just-obtained in this paper show us that the derivatives $\bigl(\frac1{e^{t}-1}\bigr)^{(i)}$ and $\bigl(\frac1{1-e^{-t}}\bigr)^{(i)}$ and the functions $\frac1{(e^t-1)^m}$ and $\frac1{(1-e^{-t})^m}$ can be expressed each other by linear combinations with coefficients involving the combinatorial numbers and the Stirling numbers of the second kind. This means that all the theorems, lemmas, and corollaries acquired in this paper are equivalent to each other.
\end{rem}

\end{document}